\newtheorem{theorem}{Theorem}
\newtheorem{lemma}[theorem]{Lemma}
\newtheorem{corollary}[theorem]{Corollary}
\theoremstyle{definition}
\newtheorem*{remark}{Remark}
\newtheorem*{acknowledgement}{Acknowledgement}
\title{Actions of higher rank, irreducible lattices on $\CAT(0)$ cubical complexes}
\author{T. T$\hat{\mathrm{a}}$m Nguy$\tilde{\hat{\mathrm{e}}}$n Phan}
\address{Department of Mathematics\\
5734 S. University Ave.\\
Chicago, IL 60637}
\email{ttamnp@math.uchicago.edu}
\DeclareMathOperator{\Lk}{Lk}
\DeclareMathOperator{\SO}{SO}
\DeclareMathOperator{\CAT}{CAT}
\DeclareMathOperator{\Cone}{Cone}
\DeclareMathOperator{\Fix}{Fix}
\DeclareMathOperator{\cube}{cube}
\def\Z{\mathbb{Z}}
\def\Q{\mathbb{Q}}
\begin{document}
\begin{abstract}
Let $\Gamma$ be an irreducible lattice of $\Q$-rank $\geq 2$ in a semisimple Lie group of noncompact type. We prove that any action of $\Gamma$ on a $\CAT(0)$ cubical complex has a global fixed point.   
\end{abstract}
\maketitle
\section{Introduction}
Let $\Gamma$ be an irreducible lattice of $\Q$-rank $r\geq 2$ in semisimple Lie groups of noncompact type. It is known that $\Gamma$ has property (FA), that is, any action of $\Gamma$ on a tree has a global fixed point. Property (FA) is generalized by Farb (\cite{Farb}), who proved that any action of $\Gamma$ on a $(r-1)$-dimensional $\CAT(0)$ complex has a global fixed point. 

The main theorem of this paper is a generalization of the fact that higher rank lattices $\Gamma$ have property (FA) in the sense that any action of $\Gamma$ on a $\CAT(0)$ cubical complex (of any dimension) has a global fixed point. This statement has been proved for groups with property (T) (\cite{Niblo}), which include higher rank, irreducible lattices $\Gamma$ in semisimple Lie groups $G$ (of noncompact type) that does not have a $\SO(n,1)$ or $SU(n,1)$ factor. 
\begin{theorem}[Main Theorem]
Let $\Gamma$ be an irreducible lattice of $\Q$-rank $\geq 2$ in semisimple Lie groups of noncompact type. Let $\Sigma$ be a $\CAT(0)$ cubical complex. Suppose that $\Gamma$ acts on $\Sigma$ by isometries preserving the cubulation of $\Sigma$. Then $\Gamma$ has a global fixed point in $\Sigma$.
\end{theorem}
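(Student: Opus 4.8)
The plan is to leverage the $\Q$-rank $\geq 2$ hypothesis through the arithmetic structure of $\Gamma$, combined with two features peculiar to $\CAT(0)$ \emph{cube} complexes (as opposed to general $\CAT(0)$ spaces): every automorphism of the cubulation is semisimple with combinatorial translation length a nonnegative integer attained on the vertex set, so that translation length $0$ already forces a fixed vertex; and every nonempty convex subcomplex (after passing to the cubical subdivision, if needed) carries a combinatorial gate projection that is equivariant under the stabiliser of the subcomplex, so that a fixed point of an elliptic element can be pushed into an invariant convex subcomplex. The overall shape is: produce fixed subcomplexes for the unipotent pieces of $\Gamma$, then glue them.

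First I would reduce to the case where $\Gamma$ has finite index in $\mathbf G(\Z)$ for a simply connected, $\Q$-simple semisimple $\Q$-group $\mathbf G$ of $\Q$-rank $\geq 2$; this is harmless, since it suffices to find a fixed point for a finite-index \emph{normal} subgroup $\Gamma_0 \trianglelefteq \Gamma$: then $\Fix(\Gamma_0)$ is a nonempty complete convex subcomplex on which the finite group $\Gamma/\Gamma_0$ acts, and a finite group acting on a complete $\CAT(0)$ space fixes the circumcentre of an orbit. Passing further to the (finite-index) subgroup generated by the integer points of the $\Q$-root subgroups, one may assume $\Gamma$ is generated by the finitely generated nilpotent subgroups $\Gamma_w = \Gamma \cap U_w$, where $U_w$ runs over the unipotent radicals of the minimal $\Q$-parabolics; since $\mathbf G$ is $\Q$-simple the relative root system is irreducible of rank $\geq 2$, so the $\Q$-root subgroups satisfy nontrivial Chevalley commutator relations. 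The decisive use of rank $\geq 2$ is that these relations make every cyclic subgroup of each $\Gamma_w$ exponentially distorted in $\Gamma$ (roughly $|u^n|_\Gamma \sim \log n$, an analogue of the Lubotzky--Mozes--Raghunathan distortion estimates), so that every element of every $\Gamma_w$ has translation length $0$ on $\Sigma$ and is therefore elliptic. Feeding this into the gate-projection lemma and inducting on Hirsch length --- first the centre of $\Gamma_w$, which is normal, then the quotient, which still acts with all elements elliptic on the fixed subcomplex of the centre --- each $\Gamma_w$ fixes a nonempty convex subcomplex $F_w \subseteq \Sigma$.

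The remaining --- and, I expect, genuinely hard --- step is to intersect the $F_w$: writing $\Gamma = \langle \Gamma_{w_1}, \dots, \Gamma_{w_k}\rangle$ for finitely many positive systems (for instance $w = \mathrm{id}$ and the longest element, so $\Gamma = \langle \Gamma \cap U, \Gamma \cap U^- \rangle$, when $\mathbf G = \SL_n$), one must show $\bigcap_i F_{w_i} \neq \emptyset$. Unlike for trees there is no Serre-type lemma, nor a two-at-a-time Helly property, to invoke directly; instead the plan is to analyse the finite set of hyperplanes separating two of the convex sets $F_w$ and $F_{w'}$, using a finite-order element of $\Gamma$ representing a Weyl-group element that permutes the $F_w$, together with the fact that $\Gamma$ has finite abelianisation (Margulis' normal subgroup theorem, which applies since $\Q$-rank $\geq 2$ forces $\R$-rank $\geq 2$). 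Finite abelianisation means $\Gamma$ admits no nontrivial homomorphism to $\Z$, which both obstructs an invariant separating direction --- forcing the bridge between $F_w$ and $F_{w'}$ to be degenerate, i.e.\ the sets to meet --- and rules out the degenerate alternative in which the action, after passing to its essential core, fixes only a point of the Roller boundary, since such a point would produce a combinatorial Busemann quasi-character and hence a nontrivial map to $\Z$. Once the $F_w$ share a common point, that point is fixed by $\Gamma$. I expect the bulk of the technical work to be exactly this last step: converting ``each root subgroup acts elliptically'' into ``$\Gamma$ acts elliptically'' by exploiting the interplay between the Chevalley relations and the median geometry of $\Sigma$.
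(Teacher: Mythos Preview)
Your overall architecture --- show the nilpotent pieces act elliptically, then intersect their fixed sets, then pass from a finite-index subgroup to $\Gamma$ --- matches the paper's, and your distortion argument for ellipticity is a legitimate alternative to the paper's route (which instead uses Farb's property that each generator has a power lying in the commutator of an ambient nilpotent group, together with the Bridson--Farb corollary). The finite-index endgame is also the same.

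The genuine gap is the gluing step, and here the paper's strategy is both different from and more concrete than yours. The key input you are missing is property~(2) of Farb's generating collection: \emph{any proper subcollection} of $\{\Gamma_1,\dots,\Gamma_{r+1}\}$ generates a nilpotent group. In particular $\langle \Gamma_i,\Gamma_j\rangle$ is nilpotent, so $W_{ij}:=F_i\cap F_j=\Fix(\langle\Gamma_i,\Gamma_j\rangle)$ is automatically nonempty. By contrast, your generators $\Gamma\cap U$ and $\Gamma\cap U^-$ already generate a finite-index subgroup of $\Gamma$, so there is no intermediate nilpotent structure to exploit, and you have no mechanism forcing $F_w\cap F_{w'}\neq\emptyset$. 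Your proposed substitute --- finite-order Weyl elements permuting the $F_w$, finite abelianisation obstructing a separating direction, Roller-boundary Busemann characters --- is a wish list rather than an argument: after passing to a torsion-free finite-index subgroup there are no finite-order elements; there is no reason the $F_w$ are permuted by anything in $\Gamma$; and ``no homomorphism to $\Z$'' does not by itself force two convex subcomplexes to meet.

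What the paper actually does, once the pairwise intersections $W_{ij}$ are in hand, is reduce to three groups $\Gamma_1,\Gamma_2,\Gamma_3$ and prove a Helly-type statement: if $x_1\in W_{13}$, $x_2\in W_{23}$ realise $d(W_{13},W_{23})$ and $y\in W_{12}$, then one of the angles $\angle_{x_i}(y,x_{3-i})$ is $<\pi/2$, and a lemma about \emph{all-right} piecewise spherical links (the links in a cube complex) produces a simplex of $W_{i3}$ through $x_i$ making angle $<\pi/2$ with $x_1x_2$, contradicting minimality. This link lemma --- if two points in a $\CAT(1)$ all-right complex are at distance $<\pi/2$, then the carrier simplices of those points share a face at distance $<\pi/2$ from each --- is where the cubical hypothesis is really used, and it is the idea your proposal lacks.
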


\begin{remark}
We make the following remarks.
\begin{itemize}
\item[1)] The fixed point of $\Gamma$ does not have to be a vertex of $\Sigma$, but it is a vertex of the barycentric subdivision of $\Sigma$. 
\item[2)] The argument also applies to action without inversions of $\Gamma$ on $\CAT(0)$ polyhedral complexes whose cells are Coxeter polytopes.
\end{itemize}
\end{remark}

This note is a special case of part of a proof of a lemma in the author's paper on piecewise locally symmetric manifolds \cite{Tampwlocsym}. Grigori Avramidi has been insisting over more than a year that this part should be written up as a theorem on group actions by higher rank, irreducible lattices on $\CAT(0)$ cubical complexes. This paper is dedicated to him for his $\cube^{\cube}$ birthday.

\section{Proof of the main theorem}
By passing to the barycentric subdivision of $\Sigma$ we will assume that action of $\Gamma$ is without inversion (that is, if an element of $\Gamma$ preserves a cell of $\Sigma$, then it fixes that cell pointwise). Also, isometries of $\Sigma$ are semisimple since the translation distance of an isometry is discrete.
\subsection{Useful theorems on lattices and groups acting semisimply on $\CAT(0)$ spaces}

The following theorem on generation of higher rank lattices by nilpotent subgroups is due Farb (\cite[Proposition 4.1]{Farb}) and was proved in a more general setting with $\Q$ replaced by and algebraic number field $k$.
\begin{theorem}[\cite{Farb}]\label{gen by nil}
Let $\Gamma$ be an irreducible lattice of $\Q$ rank $r \geq 2$, and let $\Gamma$ act on a $\CAT(0)$ space $Y$. Then exists a collection of subgroups $\mathcal{C} = \{\Gamma_1, \Gamma_2, ..., \Gamma_{r+1}\}$ such that
\begin{enumerate}
\item The groups in $\mathcal{C}$ generate a finite index subgroup of $\Gamma$. 
\item Any proper subset of $\mathcal{C}$ generates a nilpotent subgroup $U$ of $\Gamma$.
\item There exists $m\in \Z^+$ so that for each $\Gamma_i \in \mathcal{C}$, there is a nilpotent group $N <C$ so that $r^m \in [N,N]$ for all $r \in \Gamma_i$. 
\end{enumerate}
\end{theorem}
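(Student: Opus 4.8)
The plan is to build $\mathcal{C}$ out of the combinatorics of the relative (restricted) $\Q$-root system of the ambient algebraic group; note first that although the statement fixes an action on a $\CAT(0)$ space $Y$, the collection $\mathcal{C}$ and properties (1)--(3) are purely group-theoretic, so $Y$ plays no role and I would construct $\mathcal{C}$ once and for all. By Margulis arithmeticity, after passing to a finite-index subgroup I may assume $\Gamma$ is an arithmetic subgroup of $\mathbf{G}(\Q)$, where $\mathbf{G}$ is a semisimple $\Q$-group of $\Q$-rank $r$, with $\Gamma$ commensurable with $\mathbf{G}(\Z)$. Fix a maximal $\Q$-split torus $\mathbf{S}$, the relative root system $\Phi = \Phi(\mathbf{G},\mathbf{S})$, a positive system with simple roots $\Delta = \{\alpha_1,\dots,\alpha_r\}$, and the highest root $\tilde\alpha$. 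Set $\alpha_0 = -\tilde\alpha$, so that $\{\alpha_0,\alpha_1,\dots,\alpha_r\}$ is the node set of the extended (affine) Dynkin diagram. For each $\Q$-root $\alpha$ let $U_\alpha$ be the unipotent $\Q$-root subgroup, and put $\Gamma_i = \Gamma \cap U_{\alpha_i}$ for $i=0,1,\dots,r$. These $r+1$ subgroups are the members of $\mathcal{C}$; each is a lattice in a unipotent group, hence finitely generated nilpotent.

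Property (2) I would obtain directly from the defining feature of affine Dynkin diagrams: deleting any single node $\alpha_j$ from $\{\alpha_0,\dots,\alpha_r\}$ leaves a diagram of \emph{finite} type. Hence the remaining $r$ roots $\{\alpha_i : i\neq j\}$ are linearly independent and form the simple system of a finite reduced root subsystem $\Phi_j \subseteq \Phi$. The positive roots of $\Phi_j$ all lie in an open half-space, so the closed set of roots they span contains no pair $\{\beta,-\beta\}$; therefore the root subgroups $\{U_{\alpha_i} : i\neq j\}$ generate a unipotent, hence nilpotent, subgroup of $\mathbf{G}$, and consequently $\langle \Gamma_i : i\neq j\rangle$ is nilpotent. (In the non-reduced $BC_r$ case one replaces each $U_{\alpha_i}$ by the full root subgroup carrying both $\alpha_i$ and $2\alpha_i$; it is still nilpotent and the argument is unchanged.)

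For property (3), I would exploit that $r\geq 2$ forces each node root $\alpha_i$ to be decomposable: there exist $\Q$-roots $\beta,\gamma$ with $\alpha_i = \beta+\gamma$ such that the closed set of roots generated by $\{\beta,\gamma\}$ contains no opposite pair, so that $\langle U_\beta, U_\gamma\rangle$ is nilpotent. The Chevalley/Bruhat--Tits commutator formula then gives $[u_\beta(s),u_\gamma(t)] = u_{\alpha_i}(c\,st)\cdot(\text{higher terms})$; arranging $\beta,\gamma$ so that no further $p\beta+q\gamma$ is a root, this reads $U_{\alpha_i} \subseteq [\langle U_\beta,U_\gamma\rangle, \langle U_\beta,U_\gamma\rangle]$ on the level of algebraic groups. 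Controlling the denominators of the structure constant $c$ and of the relevant arithmetic lattices yields a single $m\in\Z^+$, uniform over $i$, with $(\Gamma_i)^m \subseteq [N_i,N_i]$, where $N_i = \langle U_\beta, U_\gamma\rangle \cap \Gamma$ is the required nilpotent subgroup of $\Gamma$. This step is exactly where $r\geq 2$ is indispensable: in rank one a simple root admits no such decomposition.

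Finally, property (1): since $\Delta = \{\alpha_1,\dots,\alpha_r\}$ already generates all of $\Phi$ as a root system, the subgroups $U_{\alpha_0},\dots,U_{\alpha_r}$ generate, through the commutator relations, every root subgroup $U_\beta$ with $\beta\in\Phi$, hence generate a Zariski-dense subgroup of $\mathbf{G}$. Passing from this algebraic statement to the integral points---that is, showing $\langle \Gamma_0,\dots,\Gamma_r\rangle$ has \emph{finite index} in $\Gamma$---is the part I expect to be the main obstacle, since it rests on the unipotent and bounded-generation theory for higher-rank arithmetic groups (Tits, Raghunathan, Vaserstein, Prasad--Rapinchuk) together with strong approximation, rather than on soft root-system combinatorics. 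I would invoke these structural results to conclude that the elementary subgroup generated by the integral root subgroups is of finite index, completing the verification of (1) and hence the proof.
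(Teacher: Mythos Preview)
The paper does not actually prove this theorem: it is quoted from Farb \cite[Proposition~4.1]{Farb} and used as a black box, so there is no ``paper's own proof'' to compare against. Your outline is essentially Farb's original construction---take the integral root subgroups attached to the $r+1$ nodes of the extended (affine) Dynkin diagram, use the Borel--de Siebenthal fact that deleting any node leaves a finite-type diagram (hence the surviving root groups sit inside a unipotent) for (2), write each $\alpha_i=\beta+\gamma$ and invoke the Chevalley commutator formula for (3), and appeal to the Raghunathan/Venkataramana unipotent-generation theorems for (1)---so your approach is correct and matches the cited source. The one place I would tighten is (3): the phrase ``arranging $\beta,\gamma$ so that no further $p\beta+q\gamma$ is a root'' is not always achievable, but it is also unnecessary, since one only needs $U_{\alpha_i}$ to lie in the derived subgroup of the nilpotent group $\langle U_\beta,U_\gamma\rangle$, and the commutator formula already guarantees that the $U_{\alpha_i}$-factor appears with a nonzero structure constant.
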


Since we are dealing with group action on $\CAT(0)$ cubical complexes, the following theorem (\cite{Bridson}, \cite[Proposition 2.3]{Farb}) will prove to be useful.
\begin{theorem}\label{nilpotent action}
Let $N$ be a finitely generated, torsion-free, nilpotent group acting on a $\CAT{(0)}$ space $Y$ by semi-simple isometries. Then either $N$ has a fixed point or there is an $N$-invariant flat $L$ on which $N$ acts by translations and hence, factoring through an abelian group.
\end{theorem}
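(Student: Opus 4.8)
The plan is to induct on the Hirsch length $h(N)$ of $N$, using the structure of $\Min$-sets of semisimple isometries. I will use two standard facts about a $\CAT(0)$ space $Y$ (see \cite{Bridson}): \emph{(i)} if $g$ is a semisimple isometry with $|g|>0$, then $\Min(g)$ is nonempty, closed and convex and splits isometrically as $Z_g\times\R$, with $g$ acting trivially on $Z_g$ and by a nontrivial translation on the $\R$-factor, and every isometry commuting with $g$ preserves $\Min(g)$ together with this product splitting; and if $|g|=0$ then $\Min(g)=\Fix(g)$ is nonempty, closed and convex and is preserved by every isometry commuting with $g$; \emph{(ii)} the restriction of a semisimple isometry to an invariant closed convex subspace $C$ is again semisimple, since for $p\in\Min(g)$ the nearest-point projection $\pi_C(p)$ lies in $\Min(g)\cap C$ ($\pi_C$ being $1$-Lipschitz and $g$-equivariant). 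It is convenient to drop the torsion-freeness and prove the statement for every finitely generated nilpotent group $N$, allowing the invariant flat to degenerate to a point; this class of groups is closed under subgroups and quotients. I also use the elementary facts that such a group is finite iff $h(N)=0$, and that an infinite one has infinite centre, hence contains a central element of infinite order.

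If $h(N)=0$ then $N$ is finite, has bounded orbits, and fixes the circumcentre of an orbit. If $h(N)\geq 1$, fix a central $c\in N$ of infinite order. When $|c|=0$, put $Y'=\Fix(c)$: it is nonempty, closed, convex and $N$-invariant (as $c$ is central), $N$ acts on it by semisimple isometries by \emph{(ii)}, and the image $\overline{N}$ of $N$ in $\Isom(Y')$ is finitely generated nilpotent with $c$ in its kernel, so $h(\overline{N})<h(N)$. By induction, either $\overline{N}$ fixes a point of $Y'$ --- and then so does $N$ --- or $\overline{N}$ preserves a flat $L\subseteq Y'$ on which it acts by translations through an abelian quotient, and then so does $N$ through $\overline{N}$. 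When $|c|>0$, put $Y'=\Min(c)\cong Z_c\times\R$: it is $N$-invariant, and by \emph{(i)} $N$ preserves the product splitting, so each $n\in N$ acts as $(n_1,n_2)$ with $n_1\in\Isom(Z_c)$ and $n_2\in\Isom(\R)$. Since $c$ is central, conjugation by any $n$ fixes $c$ and hence fixes the nontrivial translation of $\R$ that $c$ induces; a reflection would invert it, so every $n_2$ is a translation and $N$ acts on the $\R$-factor through a homomorphism $\chi\colon N\to\R$. By \emph{(ii)} each $n$ is semisimple on $Y'$, and a pair $(n_1,n_2)$ with $n_2$ a translation is semisimple only if $n_1$ is; thus $N$ acts on $Z_c$ by semisimple isometries, through its image $\overline{N}$ in $\Isom(Z_c)$, and $c$ lies in the kernel, so again $h(\overline{N})<h(N)$. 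By induction: if $\overline{N}$ fixes $p\in Z_c$ then $N$ preserves $\{p\}\times\R$ and acts on it by translations via $\chi$; and if $\overline{N}$ preserves a flat $F\subseteq Z_c$ with translation action through an abelian quotient, then $N$ preserves $F\times\R$ and acts on it by translations through an abelian quotient. This completes the induction.

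The hard part is the inductive bookkeeping, not any single geometric step: restricting to $\Fix(c)$ or to the $Z_c$-factor of $\Min(c)$ replaces $N$ by a nilpotent quotient that need not be torsion-free, which is exactly why the induction must run over all finitely generated nilpotent groups and why one needs that an infinite such group has infinite centre. The second thing to be careful about is that $N$ preserves the product splitting of $\Min(c)$ and acts on the $\R$-factor by translations rather than reflections; both use essentially that $c$ is central, so that $N$ commutes with $c$ and thus permutes its parallel axes.
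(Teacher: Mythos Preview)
The paper does not prove this theorem; it is quoted from \cite{Bridson} and \cite[Proposition~2.3]{Farb} and used as a black box. Your induction on the Hirsch length---splitting off a central element $c$ of infinite order, passing to $\Fix(c)$ when $|c|=0$ and to the $Z_c$-factor of $\Min(c)\cong Z_c\times\R$ when $|c|>0$---is exactly the standard argument behind those references (compare Bridson--Haefliger II.6.8--6.9 and II.7), and it is correct.

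Two small comments. First, your decision to drop torsion-freeness and run the induction over all finitely generated nilpotent groups is not just convenient but necessary, for the reason you identify: the image of $N$ in $\Isom(Z_c)$ or $\Isom(\Fix(c))$ need not be torsion-free. The supporting fact that an infinite finitely generated nilpotent group has infinite centre is true (if $Z(N)$ were finite one takes $x\in Z_2(N)$ of infinite order and checks $[n,x^{|Z(N)|}]=[n,x]^{|Z(N)|}=1$, forcing $x^{|Z(N)|}\in Z(N)$), so your inductive machine is sound. Second, the base case for finite $N$ uses circumcentres of bounded orbits, which tacitly needs $Y$ to be complete; this is harmless in the paper's setting of $\CAT(0)$ cube complexes and is the usual standing hypothesis in the cited sources.
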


Given part (3) of Theorem \ref{gen by nil} and Theorem \ref{nilpotent action}, the following corollary (\cite{Bridson},\cite[Corollary 2.4]{Farb}) of Theorem \ref{nilpotent action} implies that each of group $\Gamma_i$'s in Theorem \ref{gen by nil} fixes a nonempty set $F_i$ in the $\CAT(0)$ space $Y$.

\begin{corollary}\label{fixsetofgenerators}
Let $N$ be a finitely generated, torsion-free, nilpotent group acting on a $\CAT{(0)}$ space $Y$ by semi-simple isometries. Then
\begin{itemize}
\item[1)] If $g^m\in [N,N]$ for some $m >0$, then $g$ has a fixed point.
\item[2)] If $N$ is generated by elements each of which has a common fixed point, then $N$ has a global fixed point. 
\end{itemize}
\end{corollary}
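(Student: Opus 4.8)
The plan is to deduce both parts from the dichotomy in Theorem~\ref{nilpotent action}: either $N$ fixes a point of $Y$ --- and then there is nothing to prove in either part --- or there is an $N$-invariant flat $L\subseteq Y$ on which $N$ acts by Euclidean translations, with the action homomorphism $N\to\Isom(L)$ factoring through an abelian group and hence killing $[N,N]$. So I assume henceforth that we are in the second case and work inside $L$, which by the definition of a flat I identify isometrically with some Euclidean space: each $g\in N$ acts on $L$ as $x\mapsto x+v_g$ for a well-defined vector $v_g\in L$, the assignment $g\mapsto v_g$ is a homomorphism to the additive group of $L$, and $[N,N]$ lies in its kernel.

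For part 1), the translation vector of $g^m$ equals $m\,v_g$ (iterate the translation) and also $0$ (because $g^m\in[N,N]$ acts trivially on $L$); since $L$ is a real vector space this forces $v_g=0$, so $g$ is the identity on $L$ and in particular fixes every point of $L$. For part 2), choose generators $g_1,\dots,g_k$ of $N$, each with a fixed point $p_i\in Y$. Because $L$ is $N$-invariant it is preserved by each $g_i$, and because $L$ is convex and complete the nearest-point projection $\pi\colon Y\to L$ is defined and, by uniqueness of nearest points in a $\CAT(0)$ space, satisfies $g_i\circ\pi=\pi\circ g_i$. Hence $\pi(p_i)\in L$ is fixed by $g_i$; a translation of $L$ with a fixed point is the identity, so $v_{g_i}=0$. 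Since this holds for every $i$ and $g\mapsto v_g$ is a homomorphism, $N$ acts trivially on $L$; that is, $N$ fixes $L$ pointwise, contradicting that we are in the second case. Therefore $N$ has a global fixed point.

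I do not expect a genuine obstacle: once Theorem~\ref{nilpotent action} is in hand, the rest is bookkeeping. The three points that warrant a word of care are that ``factoring through an abelian group'' really does put $[N,N]$ in the kernel of the action on $L$ (immediate, since any homomorphism to an abelian group kills the commutator subgroup), that a torsion element of the additive group $(L,+)$ must vanish (used in part 1)), and the $g_i$-equivariance of the projection onto the invariant flat (used in part 2)), which is the standard fact that an isometry preserving a complete convex subset of a $\CAT(0)$ space commutes with the nearest-point projection onto it.
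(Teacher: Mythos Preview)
Your argument is correct. The paper does not supply its own proof of this corollary; it simply records it as a consequence of Theorem~\ref{nilpotent action} and cites \cite{Bridson} and \cite[Corollary~2.4]{Farb}. Your deduction is exactly the intended one: apply the dichotomy of Theorem~\ref{nilpotent action}, and in the flat case use that the translation homomorphism $g\mapsto v_g$ into $(L,+)$ kills $[N,N]$ and has no torsion. One cosmetic remark: in part~2) there is no actual contradiction to reach---once you show every generator has $v_{g_i}=0$, you have shown $N$ fixes $L$ pointwise and hence has a global fixed point, regardless of whether the two alternatives in Theorem~\ref{nilpotent action} are mutually exclusive. Also, you are tacitly assuming $g\in N$ in part~1); this is the intended reading, and without it one would instead argue via translation lengths that $|g^m|=m\,|g|=0$ forces $g$ to be elliptic.
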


\begin{remark}
As pointed out in \cite{Farb}, it follows from Corollary \ref{fixsetofgenerators} above that for each such $U$ in Theorem \ref{gen by nil}, the set $\Fix(U)$ in $Y$ is nonempty. 
\end{remark}

\subsection{Proof of the main theorem}
By Theorem \ref{gen by nil}, the group $\Gamma$ is virtually generated by a collection $\mathcal{C}$ of nilpotent subgroups $N_1$, $N_2$, ..., $N_{r+1}$. In this proof we only need $\Gamma$ to be virtually generated by $3$ nilpotent groups that satisfies the conclusion of Theorem \ref{gen by nil}. So we write $\mathcal{C} = \{\Gamma_1, \Gamma_2, \Gamma_3\}$, for $\Gamma_1 = N_1$, $\Gamma_2 = N_2$, and $\Gamma_3$ is the (nilpotent) group generated by $N_3, N_4, ... N_{r+1}$. Observe that $\Gamma_i$'s satisfy the first two conclusions of Theorem \ref{gen by nil}, and the group generated by any two groups $\Gamma_i$ and $\Gamma_j$ fixes a nonempty set by the above remark.

For each $i = 1, 2, 3$, let $F_i$ be the set of points that is fixed by all elements of $\Gamma_i$, and we write $F_i = \Fix(\Gamma_i)$.  Let 
\[W_{ij} = F_i \cap F_j,\]
for $i, j = 1, 2, 3$. By the remark after Theorem \ref{gen by nil}, each $W_{ij}$ is nonempty. It is clear that $F_i$'s and thus $W_{ij}$'s are convex. We want to show that $\cap_{i=1,2,3}F_i \ne\emptyset$. Suppose the contrary, that \[\cap_{i=1,2,3}F_i = \emptyset.\]
\begin{figure}
\begin{center}
\includegraphics[height=90mm]{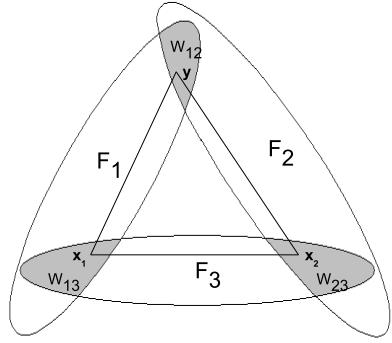}
\caption{The Fix sets $F_1$, $F_2$ and $F_3$ and their pairwise intersections.}
\end{center}
\end{figure}
Let $x_i \in W_{i3}$, for $i = 1,2$, be such that the distance $d(x_1,x_2) = d(W_{13},W_{23})$. Let $y \in W_{12}$. The geodesic $\gamma_{yx_1}$ (and $\gamma_{yx_2}$, respectively) lies in $F_1$ (and $F_2$, respectively) since $F_1$ and $F_2$ are convex. Without loss of generality, suppose that the angle (see \cite{Bridson} for the definition of \emph{angle})
\[\angle_{x_1} (y,x_2) < \pi/2,\] 
\textbf{Claim:} there is a simplex $S$ in $W_{13}$ containing $x_1$ and the angle 
\[\angle_{x_1}(S,x_1x_2) := \min_{s\in S, s\ne x_1} \angle_{x_1}(s,x_1x_2) < \pi/2.\]
Given the claim, it follows that $S$ contains some point $x_1'$ other than $x_1$ such that $d(x_1',x_2) < d(x_1,x_2)$, which is a contradiction to the choice of $x_1$ and $x_2$. Therefore,
\[\cap_{i=1,2,3}F_i \ne \emptyset.\] 
Therefore $\Gamma$ has a finite index group $\Gamma'$ that fixes a nonempty subset of $\Sigma$. Let $H$ be a finite index subgroup of $\Gamma'$ that is normal in $\Gamma$. Then the set $\Fix(H)$is nonempty. Since $\Gamma$ acts on $\Fix(H)$ with bounded orbit, it follows that $\Gamma$ has a global fixed point. We are left to prove the claim. 

Before proving the claim, we need the following definitions. A spherical simplex is \emph{all right} if each of its edge lengths is $\pi/2$. A piecewise spherical complex $Z$ is said to be \emph{all right} if all its simplices are all right.

\begin{proof}[Proof of Claim]
Let $Q$ be the cell of $\Sigma$ whose interior contains $x_1$. (If $x_1$ is a vertex of $\Sigma$, then $Q = \{x_1\}$.)
We consider the following cases.

\textbf{Case 1}: If  $Q$ is a point, then $Q = \{x_1\}$. Then the link of $\Lk(x_1, \Sigma)$ is a $\CAT(1)$ piecewise spherical, all right complex (by \cite[Lemma I.5.10]{Davis}). Apply Lemma \ref{fundamental lemma} below to $\Lk(x_1,\Sigma)$, we get a simplex $S \subset W_{13}$ containing $x_1$ such that the angle $\angle_{x_1}(x_2,S) < \pi/2$. The details of this argument is explain in the next paragraph.

Let $A$ (respectively, $B$) be the cell in $\Sigma$ that intersects nontrivially with $x_1y$ (respectively, $x_1x_2$). Let $A' = \Lk(x_1,A)$ and $B' = \Lk(x_1, B)$. Let $u = L \cap x_1y$ and $v = L \cap x_1x_2$. Then $d(u,v) < \pi/2$ in $L$. Let $U$ (respectively, $V$) be a face of $A'$ (respectively, $B'$) whose interior contains $u$ (respectively, $v$). By Lemma \ref{fundamental lemma}  there is a face $C \subset U\cap V$ such that $d_L(C, v) < \pi/2$.  Let $\alpha$ (respectively, $\beta$) be the convex hull of $x_1$ and $U$ (respectively, $V$). Then $\alpha \subset F_1$ and $\beta \subset F_3$. Let $S$ be the convex hull of $x_1$ and $C$. Therefore, the edge $S \subset W_{13}$ and has angle $< \pi/2$ with $x_1x_2$.  

\textbf{Case 2}: If $Q$ is not a point, then $L:= \Lk(x_1, \Sigma)$ is the spherical join $\Lk(x_1, Q)*\Lk(Q,\Sigma)$. \begin{itemize}
\item[2a)]If $\angle_{x_1}(x_1x_2, Q) < \pi/2$. Then pick $q\in Q$ such that $\angle_{x_1}(x_1x_2, x_1q) < \pi/2$. Since $Q \subset W_{13}$ (because $x_1$ is in the interior of $Q$ and $x_1 \in W_{13}$). We can let $S$ be the edge $x_1q \subset W_{13}$.

\item[2b)]Suppose that $\angle_{x_1}(x_1x_2, Q) \geq \pi/2$ (in which case we have equality). Then $x_1x_2$ intersects nontrivially with $\Lk(Q,\Sigma)$ at $v$. 

If $x_1y$ also intersects nontrivially with $\Lk(Q,\widetilde{T})$ at some point $u$, then argue as in the case $Q$ is a point using the fact that $\Lk(Q,\Sigma)$ is $\CAT(1)$ piecewise spherical, all right complex (\cite[Lemma I.5.10]{Davis}) and applying Lemma \ref{fundamental lemma}.%By Lemma \ref{fundamental lemma}, there is a vertex $p \in \Lk(Q,\widetilde{T})$ such that $d_{\Lk(Q,\widetilde{T})}(p,u)$ and $d_{\Lk(Q,\widetilde{T})}(p, v)$ are less than $\pi/2$. Let $U$ (respectively, $V$) be a face in $\Lk(Q,\widetilde{T)}$ that contains $p$ and whose interior contains $u$ (respectively, $v$). Let $\alpha$ (respectively, $\beta$) be the face containing $x_1$ whose intersection with $\Lk(Q,\widetilde{T})$ is $U$ (respectively, $V$). Then $\alpha \subset F_1$ and $\beta \subset F_3$. Therefore, the edge $x_1p \in W_{13}$ and has angle $< \pi/2$ with $x_1x_2$.  

Suppose that $x_1y$ does not intersects nontrivially with $\Lk(Q,\Sigma)$. Then there is a point $q \in Q$ such that $x_1y$ intersects with $H := q* \Lk(Q,\Sigma)$ (the spherical joint of $q$ and $\Lk(Q, \Sigma)$) nontrivially. Let $u = x_1y \cap H$ and let $v = x_1x_2 \cap H$. Then $d_H(q,v) = \pi/2$ and $d_H(u,v) < \pi/2$. Therefore, the angle $\angle_q (u,v) < \pi/2$ by spherical geometry. By applying Lemma \ref{fundamental lemma} to the link $\Lk(q, H) = \Lk (Q, \Sigma)$, we deduce that there is  $C \subset \Lk(Q,\Sigma)$ such that the convex hull $S$ of $Q$ and $C$ is contained in $W_{13}$ and $d_L(C,v) < \pi/2$. Thus $\angle_{x_1} (x_2, S) < \pi/2$.   
\end{itemize} \end{proof}

\subsection{Proof of Lemma \ref{fundamental lemma}}
First we prove the following lemma.
\begin{lemma}\label{pre-fundamental lemma}
Let $L$ be a $\CAT(1)$, piecewise spherical, all right complex. Suppose that $d(x,y) < \pi/2$ for some $x, y \in L$. Let $X,Y$ be highest dimensional simplices of $L$ that contains $x$ and $y$ respectively. Then $X \cap Y \ne \emptyset$, and there is a point $p \in X\cap Y$ such that $\angle_p(x,y) < \pi/2$. 
\end{lemma}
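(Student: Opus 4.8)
The plan is to analyze the unique geodesic $c\colon[0,\ell]\to L$ from $x$ to $y$ (unique since $L$ is $\CAT(1)$ and $\ell=d(x,y)<\pi/2<\pi$) and to induct on $\dim L$, carrying out the inductive step inside a link. Two ambient facts do the heavy lifting: in a $\CAT(1)$ space the closed ball of radius $<\pi/2$ about any point is convex, and the link of a point in an all right, piecewise spherical, $\CAT(1)$ complex is again all right, piecewise spherical and $\CAT(1)$, of strictly smaller dimension. Throughout, write $\sigma_x,\sigma_y$ for the carriers of $x,y$, and recall that the carrier of a point is a face of every simplex containing it, so $\sigma_x\subseteq X$ and $\sigma_y\subseteq Y$; hence it suffices to produce a common vertex of $\sigma_x$ and $\sigma_y$ and, separately, a point $p$ of $X\cap Y$ with $\angle_p(x,y)<\pi/2$.

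\emph{The case $c\subseteq\sigma$ for one closed simplex $\sigma$.} Realize $\sigma$ isometrically in a round sphere; the all right condition says its vertices $v_0,\dots,v_n$ are pairwise orthogonal unit vectors, so for $x=\sum a_iv_i$, $y=\sum b_iv_i$ (with $a_i,b_i\ge 0$, $\sum a_i^2=\sum b_i^2=1$) one has $\cos d(x,y)=\sum a_ib_i$. Thus $d(x,y)<\pi/2$ forces $\mathrm{supp}(x)\cap\mathrm{supp}(y)=\sigma_x\cap\sigma_y\ne\emptyset$, so $X\cap Y\ne\emptyset$ (one can even take $X=Y$ a top-dimensional simplex containing $\sigma$). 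For the angle, if $x\ne y$ and neither is a vertex, pick a vertex $w$ of $\sigma$ with $(\mathrm{supp}(x)\cap\mathrm{supp}(y))\setminus\{w\}\ne\emptyset$, avoiding the at most two vertices of $\sigma$ that lie on the segment $[x,y]$; the same orthogonality computation inside $\Lk(w,\sigma)$ gives $\cos\angle_w(x,y)=\big(\textstyle\sum_{i\ne w}a_ib_i\big)/\big((\sum_{i\ne w}a_i^2)^{1/2}(\sum_{i\ne w}b_i^2)^{1/2}\big)>0$, so $w$ does the job. The configurations where $x$ or $y$ is a vertex are degenerate and I would fold them into the induction below.

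\emph{The inductive step.} If $c$ meets at least two open simplices there are two natural reductions. First, split $c$ at a point $z$ where the carrier jumps: $[x,z]$ and $[z,y]$ are geodesics, each meeting strictly fewer open simplices than $c$, and $d(x,z),d(z,y)\le d(x,y)<\pi/2$, so the inductive hypothesis applies to both; convexity of $B(x,\pi/2)$ and $B(y,\pi/2)$ keeps $z$ within distance $<\pi/2$ of $x$ and of $y$. The two calls, made with a common top-dimensional simplex through $\sigma_z$, yield faces near $x$ and near $y$ meeting $\sigma_z$, and one must splice them into a single face of $X\cap Y$ along which the angle to $x$ and to $y$ stays $<\pi/2$. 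Second, one can pass to $\Lk(v,L)$ for a vertex $v$ of $\sigma_x$: whenever $d(v,y)\ge\pi/2$ the $\CAT(1)$ comparison inequality (the Alexandrov angle $\angle_v(x,y)$ is at most the comparison angle in $S^2$, which is $<\pi/2$ here because $\cos d(v,x)\cos d(v,y)\le 0<\cos d(x,y)$) makes the directions of $vx$ and $vy$ lie at distance $<\pi/2$ in $\Lk(v,L)$, where the inductive hypothesis together with coning off with $v$ finishes the step; the complementary case $d(v,y)<\pi/2$ should force $v\in\sigma_y$, hence $v\in\sigma_x\cap\sigma_y\subseteq X\cap Y$, which is precisely the lemma for the pair $(v,y)$ with $v$ a vertex — again covered by the induction.

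\emph{Where the difficulty lies.} The genuine work is in the inductive step: choosing the splitting point $z$ (or the vertex $v$) so that all the relevant distances and angles are controlled enough for the inductive hypothesis to be honestly available in the link or on the sub-geodesics, and then tracking the combinatorial position of $c$ relative to the simplicial structure carefully enough that the face produced pulls back into $X\cap Y$ — not merely into $X$ or into $Y$ — with the angle bound intact. This is the same flavour of bookkeeping already carried out in the proof of the Claim above, with its split according to whether the carrier $Q$ of $x_1$ is a point and whether $\angle_{x_1}(x_1x_2,Q)$ is $<\pi/2$ or $=\pi/2$, and it must be supplemented by the degenerate cases where $c$ runs along a low-dimensional face so that a candidate vertex lands on $c$ and one has to perturb it off the geodesic. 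I expect the bulk of the write-up to be exactly this case analysis.
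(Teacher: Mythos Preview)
Your plan is genuinely different from the paper's, and as written the inductive step does not close. Consider your second reduction: with $v\in\sigma_x$ and $d(v,y)\ge\pi/2$, you pass to $\Lk(v,L)$, noting that the directions $x',y'$ of $[v,x],[v,y]$ satisfy $d_{\Lk(v,L)}(x',y')<\pi/2$. But $y'$ only records how the geodesic $[v,y]$ \emph{leaves} $v$: it lies in $\Lk(v,\rho)$ for the first simplex $\rho\ni v$ that $[v,y]$ enters, and since $y$ is outside the open star of $v$ here, $\rho$ bears no relation to $\sigma_y$ or to $Y$. Whatever top simplex $Y'$ of $\Lk(v,L)$ the inductive hypothesis returns, the cone $v*Y'$ contains an initial arc of $[v,y]$ but not $y$, so you have not produced anything in $X\cap Y$. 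Your first reduction has the same defect in another guise: splitting at a carrier jump $z$ and using a common top simplex $Z\supseteq\sigma_z$ yields only $X\cap Z\ne\emptyset$ and $Z\cap Y\ne\emptyset$, and two faces of a simplex may well be disjoint. You flag the need to ``splice'', but that is exactly the content of the lemma, and no mechanism is offered. (The complementary case $d(v,y)<\pi/2$ is in fact fine and needs no induction: the unique geodesic $[v,y]$ is then the radial segment in the cone $\St(v)\cong v*\Lk(v,L)$, hence ends in the open star of $v$, giving $v\in\sigma_y$ directly.)

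The paper bypasses all of this with a direct, two--dimensional developing argument and no induction on $\dim L$. Assuming $X\cap Y=\emptyset$, one chooses a vertex $v$ of $X$ toward which $\gamma=[x,y]$ is initially moving (so $d(v,\gamma(a))>d(v,\gamma(b))$ on the $X$--component of $\gamma$ through $x$); if $\gamma$ hit $v$ one would get $v\in Y$ as above, so it does not. Now cone the portion of $\gamma$ lying in the star of $v$ from $v$: this produces a surface $S$ built from spherical triangles with common apex $v$, and, following \cite[Lemma~I.6.4]{Davis}, one develops $S$ by a local isometry onto $\mathbb{S}^2$ with $v$ sent to the north pole. The image of $\gamma$ is then a great-circle arc missing the pole, the image of $S$ contains the northern hemisphere, and the choice of $v$ forces, by spherical trigonometry, this arc to have length exceeding $\pi/2$ before $\gamma$ can exit the star --- contradicting $d(x,y)<\pi/2$. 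The same picture gives $\angle_v(x,y)<\pi/2$, so $p=v$ serves. Induction on dimension is used only afterward, in Lemma~\ref{fundamental lemma}, with the present lemma driving each step.
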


\begin{proof}
Suppose that $X \cap Y  = \emptyset$. Let $\gamma$ be a  unit speed geodesic connecting $x$ and $y$ so that $\gamma (0) = x$ and $y \in \gamma ([0, \pi/2])$. Let $a < 0$ and $b>0$ so that $\gamma([a,b])$ is a connected component of the intersection of $\gamma$ with $X$. Let $v$ be a vertex of $X$, and let $A_v$ be the union of all cells containing $v$. Then $X \subset A_v$. We pick $v$ so that $d(v,\gamma(a)) > d(v, \gamma(b))$. 

Let $B_v$ be the closed $\pi/2$-ball centered at $v$. Then $B_v$ is isometric to the spherical cone $\Cone (\Lk (v, L))$ on the link of $v$ in $Z$. If $\gamma$ passes through $v$, then $d(v,y) < \pi/2$. Hence $ v \in Y$, and thus $v \in X \cap Y$, which contradicts the above assumption. So $\gamma$ does not pass through $v$. 

For each $t$ such that $\gamma(t) \in A_v$, let $s_t$ be the geodesic segment connecting $v$ passing through $\gamma(t)$ with length $\max (d(v,\gamma(t)), \pi/2)$.  Let $S$ be the surface defined as the union of all such $s_t$. In the same way as in \cite[Proof of Lemma I.6.4]{Davis}, the surface $S$ is  a union of triangles with common vertex $v$ glued together in succession along $\gamma$. Thus we can develop an $S$ along $\gamma$ locally isometric to $\mathbb{S}^2$, i.e. there is a map $f \colon S \longrightarrow \mathbb{S}^2$ that is a local isometry such that $f(v)$ is the North pole. Hence, $f(\gamma)$ is a geodesic in $\mathbb{S}^2$ that misses $f(v)$. Also, $f(S)$ contains the Northern hemisphere of $\mathbb{S}^2$.

The image $f(\gamma)$ cuts inside the region $f(S)$, which contains the Northern hemisphere $N$ of $\mathbb{S}^2$, so $f(\gamma)$ has length $\geq \pi/2$. Let $d$ be such that $f(\gamma (d))$ is where  $f(\gamma)$ exits $f(S)$. It is not hard to see that $\pi/2 > d > b > 0$. Since $d(v,\gamma(a)) > d(v, \gamma(b))$, it follows that $d(f(v),f(\gamma(a))) > d(f(v), f(\gamma(b)))$. Hence $d(f(\gamma(0)), f(\gamma(d))) > \pi/2$ be spherical geometry, which is a contradiction since $\gamma$ has unit speed. Therefore, $X \cap Y \ne \emptyset$.  
We can pick the point $p$ to be $v$. That $\angle_p(x,y) < \pi/2$ also follows from spherical geometry. 
\end{proof}

\begin{lemma}\label{fundamental lemma}
Let $L$ be a $\CAT(1)$,  piecewise spherical, all right complex. Let $X,Y$ be cells of $L$, and let $x \in X$, $y\in Y$. Let $X'$ (respectively, $Y'$) be a face of $X$ (respectively, $y$) whose interior contains $x$ (respectively, $y$). If $d(x,y) < \pi/2$, then $X'\cap Y'$ contains face $C$ such that $d(x, C) < \pi/2$.
\end{lemma}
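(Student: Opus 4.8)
The plan is to reduce the lemma to Lemma~\ref{pre-fundamental lemma} by a preliminary simplification followed by an induction on $\dim L$ (which I take to be finite dimensional; in the applications $L$ is the link of a cell of $\Sigma$, and in general one may work inside a finite dimensional subcomplex containing $x$, $y$, and the geodesic joining them). First note that $X'$, being the carrier of $x$ --- the unique simplex of $L$ whose interior contains $x$ --- is a face of every simplex of $L$ containing $x$; in particular $X'$ and $Y'$ are genuine simplices of $L$, and it is enough to prove that $X'\cap Y'\ne\emptyset$. Indeed, granting this, put $C=X'\cap Y'$, a common face of $X'$ and $Y'$. Modeling the all right spherical simplex $X'$ by an orthonormal set $\{e_j\}_{j\in V(X')}$, write $x=\sum_{j\in V(X')}a_j e_j$ with every $a_j>0$; then for any vertex $e_k$ of $C$ one has $\cos d(x,e_k)=\langle x,e_k\rangle=a_k>0$, so $d(x,C)\le d(x,e_k)<\pi/2$, and $C$ is the required face.

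To show $X'\cap Y'\ne\emptyset$ under the hypothesis $d(x,y)<\pi/2$, I would argue by induction on $\dim L$. Choose maximal simplices $\widehat X\supseteq X'$ and $\widehat Y\supseteq Y'$ and apply Lemma~\ref{pre-fundamental lemma} to the highest dimensional simplices $\widehat X\ni x$ and $\widehat Y\ni y$; this produces a vertex $v$ of $\widehat X$ lying in $\widehat X\cap\widehat Y$ with $\angle_v(x,y)<\pi/2$. If $v=x$, then $x$ is a vertex (so $X'=\{x\}$) and $x=v\in\widehat Y$; the computation above, carried out inside the all right simplex $\widehat Y$, gives $\cos d(x,y)=\langle e_x,y\rangle$, which is positive precisely when $x\in V(Y')$, so $d(x,y)<\pi/2$ forces $X'\cap Y'=\{x\}\ne\emptyset$; the case $v=y$ is symmetric. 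Otherwise $v\notin\{x,y\}$, and I would pass to $\Lk(v,L)$, which is again a $\CAT(1)$, piecewise spherical, all right complex, of dimension strictly less than $\dim L$ (\cite[Lemma I.5.10]{Davis}). In $\Lk(v,L)$ the initial direction of the geodesic from $v$ to $x$ is an interior point of the face with vertex set $V(X')\setminus\{v\}$ (a short computation in the all right simplex $v\ast X'$, or in $X'$ when $v\in V(X')$); likewise the direction from $v$ to $y$ is interior to the face with vertex set $V(Y')\setminus\{v\}$; and the distance between these two directions equals $\angle_v(x,y)<\pi/2$. The inductive hypothesis, applied in $\Lk(v,L)$, then forces $\bigl(V(X')\cap V(Y')\bigr)\setminus\{v\}\ne\emptyset$, hence $X'\cap Y'\ne\emptyset$. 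The base case $\dim L=0$ is vacuous, since then $d(x,y)<\pi/2<\pi$ gives $x=y$.

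The step I expect to be the main obstacle is the application of Lemma~\ref{pre-fundamental lemma} when $X'$ is a proper face of $\widehat X$, so that $x$ is not interior to $\widehat X$: the surface development in the proof of Lemma~\ref{pre-fundamental lemma} implicitly assumes the geodesic $\gamma_{xy}$ issues from $x$ into the interior of $\widehat X$, so that the developed region covers a full hemisphere. I would handle this either by refining that argument to the ``one sided'' case --- it suffices that $\gamma_{xy}$ enter $\widehat X$ and reach the chosen vertex $v$ within distance $<\pi/2$, which $d(x,y)<\pi/2$ supplies --- or, when $X'\subsetneq\widehat X$, by first passing to the link at a vertex $w\in V(\widehat X)\setminus V(X')$ chosen compatibly with the germ of $\gamma_{xy}$ at $x$, thereby lowering $\dim L$ before invoking the inductive hypothesis. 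The remaining ingredients --- that the link of a simplex in an all right $\CAT(1)$ complex is again of this type, and the identification of the carrier of a direction inside an all right spherical simplex --- are routine in the orthonormal model.
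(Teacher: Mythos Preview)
Your proof is correct and follows essentially the same route as the paper's: induct on $\dim L$, invoke Lemma~\ref{pre-fundamental lemma} to produce a vertex $p$ with $\angle_p(x,y)<\pi/2$, and descend to $\Lk(p,L)$; your preliminary reduction to the bare statement $X'\cap Y'\ne\emptyset$ is a tidy simplification the paper does not make (it instead carries the face $D$ and the distance bound through the induction and sets $C=p\ast D$). Your worry about applying Lemma~\ref{pre-fundamental lemma} when $x\notin\operatorname{int}\widehat X$ is a concern about the \emph{proof} of that lemma (the choice $a<0$), not about its statement or your use of it---the paper invokes it in exactly the same way, so you may cite it as stated.
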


\begin{proof}
We induct on the dimension of $L$. The base case is when $L$ has dimension $1$, in which case the lemma is obvious. Suppose that $L$ has dimension $>1$. By Lemma \ref{pre-fundamental lemma}, there is a point $p \in X\cap Y$ such that $\angle_p(x,y) < \pi/2$. Let $u$ (respectively, $v$) be the intersection of the geodesic ray $px$ (respectively, $py$) with the link $\Lk(p,L)$. Since $\angle_p(x,y) < \pi/2$, we have $d_{\Lk(p,L)}(u,v) < \pi/2$. Note that $\Lk(p,L)$ is also a $\CAT(1)$ piecewise spherical, all right complex (\cite[Lemma I.5.10]{Davis}).

Let $U'$ (respectively, $V'$) be a face in $\Lk(p,L)$ whose interior contains $u$ (respectively, $v$). Apply the induction hypothesis, there is a point $q \in \Lk(p,L)$ such that  $U'\cap V'$ contains face $D$ such that $d(u, D) < \pi/2$. %If $d_L(p,x) < \pi/2$ , then 
Let $C$ be the face in $L$ that is spanned by $p$ and $D$. %Otherwise, let $C$ be the cell in $L$ that corresponds to $D$ in $\Lk(p,L)$. 
Then $C$ satisfies the condition in the lemma.
\end{proof}
 
\begin{acknowledgement}
I would like to thank Grigori Avramidi and Benson Farb for useful comments on earlier versions of this paper.
\end{acknowledgement}

\bibliographystyle{amsplain}
\bibliography{bibliography}

\providecommand{\bysame}{\leavevmode\hbox to3em{\hrulefill}\thinspace}
\providecommand{\MR}{\relax\ifhmode\unskip\space\fi MR }
% \MRhref is called by the amsart/book/proc definition of \MR.
\providecommand{\MRhref}[2]{%
  \href{http://www.ams.org/mathscinet-getitem?mr=#1}{#2}
}
\providecommand{\href}[2]{#2}
\begin{thebibliography}{1}

\bibitem{Bridson}
Martin~R. Bridson and Andr{\'e} Haefliger, \emph{Metric spaces of non-positive
  curvature}, Grundlehren der Mathematischen Wissenschaften [Fundamental
  Principles of Mathematical Sciences], vol. 319, Springer-Verlag, Berlin,
  1999.

\bibitem{Davis}
Michael~W. Davis, \emph{The geometry and topology of {C}oxeter groups}, London
  Mathematical Society Monographs Series, vol.~32, Princeton University Press,
  Princeton, NJ, 2008.

\bibitem{Farb}
Benson Farb, \emph{Group actions and {H}elly's theorem}, Adv. Math.
  \textbf{222} (2009), no.~5, 1574--1588.

\bibitem{Tampwlocsym}
T.~T. Nguyen~Phan, \emph{Gluing locally symmetric manifolds, asphericity and
  rigidity (preprint)}.

\bibitem{Niblo}
Graham Niblo and Lawrence Reeves, \emph{Groups acting on {${\rm CAT}(0)$} cube
  complexes}, Geom. Topol. \textbf{1} (1997), approx.\ 7 pp.\ (electronic).
  \MR{1432323 (98d:57005)}

\end{thebibliography}

\end{document}